\newtheorem{thm}{Theorem}[section]
\newtheorem{cor}[thm]{Corollary}
\newtheorem{lem}[thm]{Lemma}
\theoremstyle{definition}
\newtheorem{defn}[thm]{Definition}
\theoremstyle{remark}
\newtheorem*{rem}{Remark}
\numberwithin{equation}{section}
\begin{document}


\title{Entropy functions and functional equations}

\author[E.~Gselmann]{Eszter Gselmann}

\address{Institute of Mathematics\\
University of Debrecen\\
P. O. Box: 12.\\
Debrecen\\
H--4010\\
Hungary}

\email{gselmann@science.unideb.hu}

\begin{abstract}
The purpose of this note is to give the general solution of 
two functional equations connected to the Shannon entropy and also 
to the Tsallis entropy. As a result of this, we present the regular solution of 
these equations, as well. Furthermore, we point out that 
the regularity assumptions used in previous works can substantially be weakened. 
\end{abstract}

\keywords{functional equation, entropy, Shannon entropy, Tsallis entropy}

\subjclass{Primary 39B22; Secondary 94A17.}

\maketitle

\section{Introduction and preliminaries}

Since the celebrated paper of Claude E.~Shannon (see \cite{Sha48}) appeared, 
the information theory has become an extensive branch of mathematics. 
Furthermore, it is known that information measures can be characterized 
via functional equations. 
Concerning this, the reader can consult the two basic monographs 
Acz\'{e}l--Dar\'{o}czy \cite{AD75} and Ebanks--Sahoo--Sander \cite{ESS98}. 

Although the characterization problem of information measures 
nearly comes to the end, from time to time one can meet new functional 
equations from this area. 
A possible explanation for this is that the Shannon entropy and also the 
entropy of degree alpha (or Tsallis entropy) has been re--discovered by 
physicists and engineers, see Dar\'{o}czy \cite{Dar70} and Tsallis \cite{Tsa88}. 

The aim of this note is to give the general solution of two functional 
equations connected to the notion of the Shannon entropy and also that of the 
Tsallis entropy. More precisely, 
in the second section we will firstly solve the equation 
\[
 \tag{$\ast$} 
f(xy)+f((1-x)y)-f(y)=\left(f(x)+f(1-x)\right)y^{q}, 
\]
which is supposed to hold for the unknown function 
$f:]0, 1]\rightarrow\mathbb{R}$ for all $x\in ]0, 1[$ and 
$y\in ]0, 1]$, where $q\in\mathbb{R}$ is a fixed parameter. 

For the unknown function $f:]0, 1]\rightarrow\mathbb{R}$ the equation
\[
 \tag{$\ast\ast$}
f(xy)=\left(\frac{x^{\alpha}+x^{\beta}}{2}\right)f(y)+\left(\frac{y^{\alpha}+y^{\beta}}{2}\right)f(x)
\]
will also be solved which is assumed to hold for all $x, y\in ]0, 1]$, where 
$\alpha, \beta\in\mathbb{R}$ are fixed parameters. 

These two functional equations were solved in Sharma--Taneja \cite{ST75} and also 
in Furuichi \cite{Fur10} under the assumptions that the unknown function is 
\emph{nonnegative} and \emph{differentiable} and they called the solutions of 
these equations \emph{entropy functions} -- to this alludes the title of the present work. 
First we give the general solution 
of these equations and then we will point out that the regularity suppositions 
(that is, nonnegativity and differentiability) can \emph{essentially} be weakened 
to get the same result as that of \cite{Fur10, ST75}. 

In what follows some preliminary definitions and results will be listed, mainly 
from the theory of functional equation, these results can also be found in 
Kuczma \cite{Kuc85}. 

\begin{defn}\label{D1.1}
 Let $I\subset\mathbb{R}$ and
$
\mathscr{A}=\left\{(x, y) \vert x, y, x+y\in I\right\}.
$
A function
$a:I\rightarrow\mathbb{R}$ is called \emph{additive on $\mathscr{A}$} if
\begin{equation}\label{Eq1.2}
a\left(x+y\right)=a\left(x\right)+a\left(y\right)
\end{equation}
holds for all pairs $(x, y)\in \mathscr{A}$. \\
\noindent
Consider the set
$
\mathscr{I}=\left\{(x, y) \vert x, y, xy \in I\right\}.
$
We say that
$\mu:I\rightarrow\mathbb{R}$ is \emph{multiplicative on $\mathscr{I}$} if the functional equation
\begin{equation}\label{Eq1.3}
\mu\left(x y\right)=\mu\left(x\right)\mu\left(y\right)
\end{equation}
is fulfilled for all $(x, y)\in\mathscr{I}$. \\
\noindent
A function
$\ell:I\rightarrow\mathbb{R}$ is called \emph{logarithmic on $\mathscr{I}$}
if it satisfies the functional equation
\begin{equation}\label{Eq1.4}
\ell\left(x y\right)=\ell\left(x\right)+\ell\left(y\right)
\end{equation}
for all $(x, y)\in \mathscr{I}$.
\end{defn} 

Henceforth, for all $n\geq 2$ we define the set $D_{n}$ by
\[
 D_{n}=\left\{(x_{1}, \ldots, x_{n})\in\mathbb{R}^{n} 
\vert x_{1}, \ldots, x_{n}, \sum_{i=1}^{n}x_{i}\in ]0, 1[\right\}. 
\]

As we wrote above, we will also determine the regular solutions of equations 
$(\ast)$ and $(\ast\ast)$. 
To do this, the following regularity theorems 
will be applied. 

\begin{lem}\label{L1.1}
 Let $a:]0, 1[\rightarrow\mathbb{R}$ be an additive function on the set $D_{2}$ and assume that 
\begin{enumerate}[(i)]
 \item $a$ is bounded above or below on subset of $]0, 1[$ that has positive 
Lebesgue measure;
\item or $a$ is Lebesgue measurable. 
\end{enumerate}
Then there exists $c\in\mathbb{R}$ such that 
\[
 a(x)=cx
\]
holds for all $x\in ]0, 1[$. 
\end{lem}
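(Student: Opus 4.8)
The plan is to reduce the statement on $D_2$ to a genuinely global additive function on all of $\mathbb{R}$, and then invoke the classical regularity theorems for additive functions (the Ostrowski--type theorem for boundedness on a positive-measure set, and the measurability theorem, both in Kuczma \cite{Kuc85}). The key obstacle is precisely that $a$ is only assumed additive on the restricted domain $D_2$, so the familiar theorems, which are stated for additive functions $A:\mathbb{R}\rightarrow\mathbb{R}$ satisfying $A(x+y)=A(x)+A(y)$ for \emph{all} $x,y\in\mathbb{R}$, do not apply verbatim. Bridging this gap is the heart of the argument.

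First I would establish an extension lemma: every function $a:]0,1[\rightarrow\mathbb{R}$ that is additive on $D_2$ extends to an additive function $A:\mathbb{R}\rightarrow\mathbb{R}$ agreeing with $a$ on $]0,1[$. The standard way to see this is to note that the graph of $a$ generates a subgroup of $\mathbb{R}^2$; more concretely, one defines $A$ on the rational cone generated by $]0,1[$ (which is all of $\mathbb{R}$, since $]0,1[$ spans $\mathbb{R}$ additively over $\mathbb{Q}$) by setting, for $x=\sum_i q_i x_i$ with $q_i\in\mathbb{Q}$ and $x_i\in]0,1[$, the value $A(x)=\sum_i q_i a(x_i)$, and then checking this is well defined using only the local additivity on $D_2$. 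This type of extension result for functions additive on a restricted domain is itself classical and is recorded in Kuczma \cite{Kuc85}; I would cite it rather than reprove it, as the verification that the extension is single-valued is a routine but tedious induction on the number of terms.

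Once $A:\mathbb{R}\rightarrow\mathbb{R}$ is a genuine additive function restricting to $a$ on $]0,1[$, the regularity hypotheses transfer immediately. If $a$ is bounded above or below on a subset $E\subset]0,1[$ of positive Lebesgue measure, then so is $A$, and the classical theorem (every additive function bounded on one side on a set of positive measure is of the form $x\mapsto cx$) forces $A(x)=cx$ for all $x\in\mathbb{R}$ with some constant $c\in\mathbb{R}$. Likewise, if $a$ is Lebesgue measurable on $]0,1[$, then $A$ is measurable on a set of positive measure, and the measurability version of the same theorem again yields $A(x)=cx$ everywhere. In either case $c=A(1)$, say, and restricting back to $]0,1[$ gives $a(x)=cx$ for all $x\in]0,1[$, which is the desired conclusion.

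The main obstacle, as noted, is the passage from local to global additivity; everything after that is a direct appeal to the cited regularity theorems. I would therefore expect the author's proof either to invoke an off-the-shelf extension theorem from \cite{Kuc85} (the cleanest route) or to absorb the extension implicitly by applying a version of the regularity theorems already formulated for functions additive merely on an open subset such as $D_2$. In writing the proof I would flag explicitly which formulation of the classical theorem I am using, since the two hypotheses (i) and (ii) correspond to two distinct but parallel classical results, and both must be covered.
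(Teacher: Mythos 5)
Your proposal is correct and matches the paper's own treatment: the paper gives no proof of Lemma \ref{L1.1}, quoting it as a known regularity result from Kuczma \cite{Kuc85}, and it records separately (immediately after Lemma \ref{L1.2}) exactly the extension fact you identify as the crux, namely that a function additive on $D_{2}$ extends uniquely to an additive function on all of $\mathbb{R}$, after which the classical Ostrowski and measurability theorems apply. Your decision to cite the extension theorem rather than reprove it is precisely the route the paper takes.
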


\begin{lem}\label{L1.2}
 Let $\ell:]0, 1[\rightarrow\mathbb{R}$ be a logarithmic function on the set 
\[
\widetilde{\mathscr{L}}=\left\{(x, y) \vert x, y, xy\in ]0, 1[\right\}
\]
and assume that 
\begin{enumerate}[(i)]
 \item $\ell$ is bounded above or below on subset of $]0, 1[$ that has positive 
Lebesgue measure;
\item or $\ell$ is Lebesgue measurable. 
\end{enumerate}
Then there exists $c\in\mathbb{R}$ such that 
\[
 \ell(x)=\ln(x)
\]
holds for all $x\in ]0, 1[$. 
\end{lem}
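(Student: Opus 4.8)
The plan is to reduce the logarithmic equation to the additive one and then invoke Lemma~\ref{L1.1}. First I would observe that for $x,y\in ]0,1[$ the product $xy$ automatically lies in $]0,1[$, so $\ell$ in fact satisfies $\ell(xy)=\ell(x)+\ell(y)$ for \emph{every} pair $(x,y)\in ]0,1[^{2}$, the constraint defining $\widetilde{\mathscr{L}}$ being vacuous. Introducing a new variable through $x=e^{-u}$, I would define $A:]0,\infty[\rightarrow\mathbb{R}$ by $A(u)=\ell(e^{-u})$. Since $e^{-u}$, $e^{-v}$ and $e^{-(u+v)}$ all belong to $]0,1[$, the logarithmic equation yields $A(u+v)=A(u)+A(v)$ for all $u,v>0$; that is, $A$ is additive on the whole half-line, and in particular its restriction $B=A|_{]0,1[}$ is additive on the set $D_{2}$.

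Next I would transfer the regularity hypothesis from $\ell$ to $B$. In the measurable case this is immediate: $u\mapsto e^{-u}$ is a diffeomorphism, so $A$, and hence $B$, is Lebesgue measurable whenever $\ell$ is. The boundedness case is the one that needs care, and I expect it to be the main obstacle, because the natural additive function $A$ lives on $]0,\infty[$ while Lemma~\ref{L1.1} is stated on $]0,1[$, and a set on which $\ell$ is bounded may lie entirely away from the interval $]e^{-1},1[$ corresponding to $]0,1[$. To handle it I would use that $x\mapsto -\ln x$ is a $C^{1}$ diffeomorphism, so a set $E\subseteq ]0,1[$ of positive measure on which $\ell$ is, say, bounded above is carried to a set $F\subseteq ]0,\infty[$ of positive measure on which $A$ is bounded above. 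Choosing an integer $n$ with $|F\cap ]0,n[\,|>0$ and exploiting the identity $A(u)=nA(u/n)$ (a consequence of additivity), I would conclude that $A$ is bounded above by a constant on the positive-measure set $\left\{u/n:u\in F\cap ]0,n[\right\}\subseteq ]0,1[$. Thus $B$ satisfies hypothesis (i) of Lemma~\ref{L1.1}.

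Applying Lemma~\ref{L1.1} to $B$ then gives a constant $c\in\mathbb{R}$ with $B(u)=cu$ for all $u\in ]0,1[$. To upgrade this to the whole half-line I would use additivity once more: any $u>0$ can be written as $u=u_{1}+\cdots+u_{m}$ with each $u_{i}\in ]0,1[$ (for instance $u_{i}=u/m$ with $m>u$), whence $A(u)=\sum_{i}A(u_{i})=cu$. Returning to the original variable via $u=-\ln x$ gives $\ell(x)=A(-\ln x)=-c\ln x$ for every $x\in ]0,1[$; after renaming the constant this is precisely the asserted form $\ell(x)=c\ln x$. The same argument with \emph{bounded below} in place of \emph{bounded above} settles the remaining subcase.
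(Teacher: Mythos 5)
Your proof is correct. Note that the paper itself offers no proof of this lemma at all: it is quoted as a known regularity theorem with a citation to Kuczma, so there is no in-paper argument to compare against. Your route is the standard one and you execute it properly: the observation that the restriction defining $\widetilde{\mathscr{L}}$ is vacuous on $]0,1[^{2}$ is right, the substitution $A(u)=\ell(e^{-u})$ correctly converts the logarithmic equation into full additivity on the half-line, and the transfer of the regularity hypotheses is sound in both cases --- measurability passes through the diffeomorphism $u\mapsto e^{-u}$ because diffeomorphisms preserve Lebesgue measurability, and in the boundedness case you correctly identified and handled the one genuinely delicate point, namely that the positive-measure set may live far from $]0,1[$; the rescaling $A(u)=nA(u/n)$, which preserves one-sided bounds since $n>0$, moves it into $]0,1[$ where Lemma \ref{L1.1} applies, and the same scaling identity extends $B(u)=cu$ from $]0,1[$ back to all of $]0,\infty[$. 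One incidental payoff of your derivation: you obtain $\ell(x)=c\ln x$, which shows that the displayed conclusion of the lemma as printed ($\ell(x)=\ln(x)$ with an unused constant $c$) contains a typo --- the intended conclusion is $\ell(x)=c\ln(x)$, and indeed the constant multiple is what the rest of the paper uses.
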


We also mention that in case $a:]0, 1[\rightarrow\mathbb{R}$ is an additive 
function on the set $D_{2}$, then it can be uniquely extended to function 
$\widetilde{a}:\mathbb{R}\rightarrow\mathbb{R}$ which is additive on 
$\mathbb{R}$ (cf. Kuczma \cite{Kuc85}). 
For the sake of simplicity we will always bear in mind this fact, and 
the extension of the function in question will always be denoted by 
the same character. 

The notion of derivations will also be utilized in the next section, see 
Kuczma \cite{Kuc85}.

\begin{defn}
 An additive function $a:\mathbb{R}\rightarrow\mathbb{R}$ is termed to be 
a \emph{real derivation}, if it also fulfills the equation 
\[
 d(xy)=xd(y)+yd(x)
\]
for all $x, y\in\mathbb{R}$. 
\end{defn}
 
From this definition immediately follows that every real derivation vanishes at the 
rationals. 
Additionally, something more is true. 
Namely, every real derivation is identically zero on the set of
algebraic numbers (over the rationals). 
Furthermore, if a real derivation is Lebesgue measurable or 
bounded above or below on the set that has positive 
Lebesgue measure, then it is identically zero. 
Therefore, it can be seen that the non--trivial real derivations can be very irregular. 
Although it is surprising, there exists non identically zero real derivation, see
Theorem 14.~2.~2 in Kuczma \cite{Kuc85}. 

The following lemma was proved in \cite{GM09}. 

\begin{lem}\label{L1.3}
 Suppose that the function 
$\varphi:]0, +\infty[\rightarrow\mathbb{R}$ is such that 
\[
 \varphi(xy)=x\varphi(y)+y\varphi(x) 
\qquad 
\left(x, y\in ]0, 1[\right)
\]
and the function $g:]0, 1[\rightarrow\mathbb{R}$ defined by 
\[
 g(x)=\varphi(x)+\varphi(1-x) 
\qquad
\left(x\in ]0, 1[\right)
\]
is Lebesgue measurable or it is bounded (above and below) on a subset of 
$]0, 1[$ that has positive Lebesgue measure. 
Then there exist $c\in\mathbb{R}$ and a real derivation 
$d:\mathbb{R}\rightarrow\mathbb{R}$ such that 
\[
 \varphi(x)=cx\ln(x)+d(x)
\]
is fulfilled for any $x\in ]0, 1[$. 
\end{lem}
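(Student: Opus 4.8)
The plan is to transform the multiplicative-type equation into a logarithmic one, to read off the regular coefficient $c$ from the behaviour of $g$, and finally to recognise the remainder as a derivation.

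First I would divide the hypothesis $\varphi(xy)=x\varphi(y)+y\varphi(x)$ by $xy$. Setting $\ell(x)=\varphi(x)/x$, this becomes $\ell(xy)=\ell(x)+\ell(y)$ whenever $x,y,xy\in]0,1[$, so that $\ell$ is logarithmic on the set $\widetilde{\mathscr{L}}$ of Lemma \ref{L1.2} and $\varphi(x)=x\ell(x)$; consequently $g(x)=x\ell(x)+(1-x)\ell(1-x)$. I would also record that the function $m(x)=cx\ln(x)$ satisfies $m(xy)=xm(y)+ym(x)$ for every real $c$, whence, for any constant $c$, the difference $d:=\varphi-m$ again obeys $d(xy)=xd(y)+yd(x)$ on $]0,1[$.

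The next step is to derive a functional equation for $g$ alone. Using the logarithmic property of $\ell$, a direct substitution (replacing $y$ by $y/(1-x)$ with $x+y<1$) shows
\[
g(x)+(1-x)\,g\!\left(\frac{y}{1-x}\right)=x\ell(x)+y\ell(y)+(1-x-y)\ell(1-x-y),
\]
and the right-hand side is symmetric in $x$ and $y$; hence $g$ solves the fundamental equation of information on $D_{2}$. Since $g$ is assumed to be Lebesgue measurable or bounded on a set of positive measure, the regularity theory for this equation — which ultimately rests on Lemmas \ref{L1.1} and \ref{L1.2} — forces
\[
g(x)=c\,\bigl[x\ln(x)+(1-x)\ln(1-x)\bigr]
\]
for some $c\in\mathbb{R}$. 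I expect this identification of $g$ with a constant multiple of the binary entropy to be the main obstacle, since this is precisely the point at which the weak regularity hypothesis is consumed.

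Finally, I would fix this $c$ and put $d(x)=\varphi(x)-cx\ln(x)$, so that $d(x)=x\,\tilde{\ell}(x)$ with the logarithmic function $\tilde{\ell}=\ell-c\ln$. From the displayed form of $g$ one gets $d(x)+d(1-x)=g(x)-c[x\ln(x)+(1-x)\ln(1-x)]=0$, i.e. $x\tilde{\ell}(x)+(1-x)\tilde{\ell}(1-x)=0$ on $]0,1[$. For $(u,v)\in D_{2}$, applying this with $x=u/(u+v)$, multiplying by $u+v$, and expanding $\tilde{\ell}(u/(u+v))=\tilde{\ell}(u)-\tilde{\ell}(u+v)$ (and likewise for $v$) yields $(u+v)\tilde{\ell}(u+v)=u\tilde{\ell}(u)+v\tilde{\ell}(v)$, that is $d(u+v)=d(u)+d(v)$. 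Thus $d$ is additive on $D_{2}$ and so extends uniquely to a function additive on $\mathbb{R}$; since $d(xy)=xd(y)+yd(x)$ holds on $]0,1[$, a standard extension argument promotes this identity to all of $\mathbb{R}$, making $d$ a real derivation. This gives $\varphi(x)=cx\ln(x)+d(x)$ for every $x\in]0,1[$, as claimed.
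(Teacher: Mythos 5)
You should first note that the paper contains no proof of Lemma \ref{L1.3} at all --- it is quoted verbatim from \cite{GM09} --- so your attempt can only be compared with the strategy of that cited source; your reduction through the fundamental equation of information is in fact the natural (and very likely the intended) route. Most of your computation is correct and checks out: dividing by $xy$ to obtain the logarithmic function $\ell$, the verification that $g(x)+(1-x)g\bigl(\tfrac{y}{1-x}\bigr)$ equals the symmetric expression $x\ell(x)+y\ell(y)+(1-x-y)\ell(1-x-y)$ on $D_{2}$, the deduction of $d(u+v)=d(u)+d(v)$ on $D_{2}$ from $x\tilde{\ell}(x)+(1-x)\tilde{\ell}(1-x)=0$, and the promotion of the Leibniz rule from $]0,1[$ to all of $\mathbb{R}$ by rational rescaling of an additive function are all sound.

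The genuine gap is the step you yourself flagged as the main obstacle: the claim that regularity of $g$ forces $g(x)=c\bigl[x\ln(x)+(1-x)\ln(1-x)\bigr]$, which you justify only by saying the regularity theory of the fundamental equation ``ultimately rests on Lemmas \ref{L1.1} and \ref{L1.2}.'' That attribution cannot carry the weight. Regularity of the sum $g(x)=x\ell(x)+(1-x)\ell(1-x)$ does \emph{not} transfer to $\ell$ itself: the conclusion of the lemma, with its possibly nonzero (hence nonmeasurable) derivation $d$, shows that $\ell$ can be wildly irregular while $g$ is analytic, precisely because $d(x)+d(1-x)=d(1)=0$ cancels in the sum; so no direct appeal to the regularity of additive or logarithmic functions can close this step. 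What is actually needed is: (a) the observation that on the open domain the regular solutions of the fundamental equation also contain a term $bx$ (note that $f(x)=bx$ is a solution), which you must eliminate using the built-in symmetry $g(x)=g(1-x)$; (b) an extension of $g$ to $[0,1]$ by $g(0)=g(1)=0$, where this same symmetry is exactly what makes the closed-domain equation hold for pairs with $x+y=1$; and (c) the deep regularity theorems for information functions --- the classical measurability result (Lee, Dar\'{o}czy) and, for boundedness on a set of positive Lebesgue measure, Diderrich's theorem. Without these inputs the central claim is unproved, and as stated it is circular in spirit: by the known general solution of the fundamental equation on the open domain, namely $f(x)=\varphi(x)+\varphi(1-x)+bx$ with $\varphi$ exactly as in the hypothesis of the lemma (Maksa, Ng), the regularity statement you invoke as a black box is essentially equivalent to Lemma \ref{L1.3} itself. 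With the citations in (c) inserted and point (a) addressed, your argument becomes a complete proof.
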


During the proof of our main theorem concerning equation $\left(\ast\right)$ 
we will apply a result concerning the 
so--called cocycle equation, see Jessen--Karpf--Thorup \cite{JKT68}. 
In the proof however this equation (i.e., the cocycle equation) will not be satisfied on the whole domain 
but only on a restricted one. 
Therefore we will apply a result of Ng \cite{Ng74}, in which the author solves the cocycle
equation on a restricted domain, 
see also Acz\'{e}l--Ng \cite{AN83} and 
Ebanks--Sahoo--Sander \cite{ESS98}.

\begin{thm}[Ng \cite{Ng74}]\label{T1.1}
 Let $\mu:]0, 1[\rightarrow\mathbb{R}$ be a given multiplicative 
function and $G:D_{2}\rightarrow\mathbb{R}$ be a function. 
Then the general solution of the system of functional 
equations 
\begin{equation}\label{Eq1.5}
G(x, y)=G(y, x);
\qquad 
\left((x, y)\in D_{2}\right) 
\end{equation}
\begin{equation}\label{Eq1.6}
 G(x, y)+G(x+y, z)=G(y, z)+G(x, y+z);
\qquad 
\left((x, y, z)\in D_{3}\right)
\end{equation}
and 
\begin{equation}\label{Eq1.7}
G(tx, ty)=\mu(t)G(x, y) 
\qquad 
\left(t\in ]0, 1[, (x, y)\in D_{2}\right) 
\end{equation}
is given by in case $\mu(x)=x$, 
\begin{equation}
 G(x, y)=\varphi(x)+\varphi(y)-\varphi(x+y), 
\qquad 
\left((x, y)\in D_{2}\right)
\end{equation}
where $\varphi:]0, +\infty[\rightarrow\mathbb{R}$ is such that 
\[
 \varphi(xy)=y\varphi(x)+x\varphi(y),  
\qquad 
\left(x, y\in ]0, +\infty[\right)
\]
otherwise there exists $c\in\mathbb{R}$ such that 
\[
 G(x, y)=c\left[\mu(x)+\mu(y)-\mu(x+y)\right]
\]
is fulfilled for all $(x, y)\in D_{2}$. 
\end{thm}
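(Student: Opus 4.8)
The plan is to establish first that each listed function solves the system, and then to prove completeness by reducing the problem to a coboundary representation and transferring the homogeneity condition \eqref{Eq1.7} onto a one--variable ``potential''. That the two families are solutions is a direct verification: for $G(x,y)=\varphi(x)+\varphi(y)-\varphi(x+y)$ with $\varphi(xy)=y\varphi(x)+x\varphi(y)$, the symmetry \eqref{Eq1.5} is obvious, the identity \eqref{Eq1.6} telescopes, and \eqref{Eq1.7} reduces to a relation that follows from the derivation identity; the computation for $c[\mu(x)+\mu(y)-\mu(x+y)]$ is analogous and uses only $\mu(tx)=\mu(t)\mu(x)$.

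For completeness, the first and structurally decisive step is to use the symmetry \eqref{Eq1.5} together with the cocycle equation \eqref{Eq1.6} to produce a function $\varphi$ with
\[
G(x,y)=\varphi(x)+\varphi(y)-\varphi(x+y)\qquad\left((x,y)\in D_{2}\right).
\]
This is exactly the content of the Jessen--Karpf--Thorup coboundary theorem, invoked here on the restricted domains $D_{2},D_{3}$, and I would keep in mind that $\varphi$ is determined only up to an additive function --- a freedom to be exploited below. Substituting this representation into \eqref{Eq1.7} and cancelling, one finds that for every fixed $t\in\,]0,1[$ the function
\[
H_{t}(x):=\varphi(tx)-\mu(t)\varphi(x)
\]
satisfies $H_{t}(x)+H_{t}(y)=H_{t}(x+y)$, that is, $H_{t}$ is additive in its argument. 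Writing $\varphi(tsx)$ in two ways and using $\mu(ts)=\mu(t)\mu(s)$ together with the commutativity of multiplication yields the consistency relation
\[
\mu(t)H_{s}(x)+H_{t}(sx)=\mu(s)H_{t}(x)+H_{s}(tx),
\]
which, together with the additivity of each $H_{t}$, carries all the remaining information.

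The final step is the case analysis, and this is where the main difficulty lies, since no regularity is available and everything must be extracted algebraically. When $\mu\neq\mathrm{id}$, I would show that the relation above forces $H_{t}$ to be absorbable into the potential, i.e.\ that $\varphi$ can be modified by a suitable additive function so that $H_{t}\equiv 0$, whence $\varphi(tx)=\mu(t)\varphi(x)$; interchanging $t$ and $x$ then gives $\mu(t)\varphi(x)=\mu(x)\varphi(t)$, so that $\varphi=c\mu$ for some constant $c$ (the degenerate subcases $\varphi\equiv 0$ and $\mu\equiv 0$ being handled separately), and therefore $G(x,y)=c[\mu(x)+\mu(y)-\mu(x+y)]$. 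When $\mu=\mathrm{id}$ this full absorption is obstructed precisely by the derivation--type part, and the target is instead to prove that each additive $H_{t}$ is \emph{linear} in its argument, namely $H_{t}(x)=x\varphi(t)$, which is equivalent to the sought identity $\varphi(xy)=y\varphi(x)+x\varphi(y)$ and, after a standard extension argument for logarithmic functions, holds on all of $\,]0,+\infty[$. I expect the crux of the whole proof to be exactly this vanishing: showing, using only the symmetry of the consistency relation and the cocycle--type behaviour of the scaling defect $H_{t}(sx)-sH_{t}(x)$, that this defect is identically zero. Conceptually, the dichotomy expresses that the relevant $\mu$--twisted first cohomology of the multiplicative semigroup is trivial for $\mu\neq\mathrm{id}$ but is carried by the real derivations when $\mu=\mathrm{id}$, and rendering this dichotomy explicit and elementary is the heart of the argument.
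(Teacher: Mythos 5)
The paper itself offers no proof of this statement: Theorem \ref{T1.1} is quoted verbatim from Ng \cite{Ng74}, so your attempt can only be measured against Ng's original argument. Its overall architecture you do reproduce correctly --- represent the symmetric cocycle as a coboundary $G(x,y)=\varphi(x)+\varphi(y)-\varphi(x+y)$, transfer the homogeneity \eqref{Eq1.7} onto the potential via $H_{t}(x)=\varphi(tx)-\mu(t)\varphi(x)$ (your consistency relation $\mu(t)H_{s}(x)+H_{t}(sx)=\mu(s)H_{t}(x)+H_{s}(tx)$ is correctly derived, and the direct verification and the endgame $\mu(t)\varphi(x)=\mu(x)\varphi(t)\Rightarrow\varphi=c\mu$ are fine) --- but two genuine gaps remain.

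First, you invoke the Jessen--Karpf--Thorup theorem as if it directly gave the coboundary representation ``on the restricted domains $D_{2}, D_{3}$''. JKT prove their result for cocycles on groups, i.e.\ on a full domain; on the open simplex $D_{2}$, where \eqref{Eq1.6} only holds for $(x,y,z)\in D_{3}$, the representation of symmetric cocycles as coboundaries is not a corollary of JKT but requires an extension theorem (extending $G$ to a symmetric cocycle on a full domain, or solving the restricted equation directly). That extension is precisely the delicate technical content of Ng's paper \cite{Ng74} (see also \cite{AN83}), and the present paper cites Ng rather than JKT for exactly this reason. Second, and more seriously, the decisive step of the case analysis is announced but never carried out: you state the two targets --- that for $\mu\neq\mathrm{id}$ the defect $H_{t}$ can be absorbed into an additive modification of $\varphi$ (equivalently $H_{t}(x)=A(tx)-\mu(t)A(x)$ for a single additive $A$), and that for $\mu=\mathrm{id}$ one has $H_{t}(x)=x\varphi(t)$, i.e.\ the derivation identity --- and then say explicitly that you ``expect the crux of the whole proof to be exactly this vanishing''. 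Since no regularity is available and $\mu$ may be a wildly non-measurable multiplicative function, this cohomological dichotomy is where essentially all the difficulty of the theorem lives, and no argument for it is given. As it stands the proposal is a correct and well-organized roadmap, consistent with the true statement (including the degenerate subcases you flag), but not a proof; in addition, the promised extension of the identity $\varphi(xy)=y\varphi(x)+x\varphi(y)$ from arguments in $]0,1[$ to all of $]0,+\infty[$ (via $\varphi(x)=x\ell(x)$ with $\ell$ logarithmic) is standard but should be spelled out.
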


\section{Main results} 

In this section we will find the 
general solutions of equations $\left(\ast\right)$ and 
$\left(\ast\ast\right)$. After this, the regular solutions of these 
equations will be presented. Furthermore, it will be pointed out that 
the regularity assumptions of \cite{Fur10} and \cite{ST75} can be 
substantially weakened. Moreover, in some cases these suppositions can even be 
omitted. 

\begin{thm}\label{T2.1}
 Let $q\in\mathbb{R}$ be arbitrarily fixed, then the function 
$f:]0, 1]\rightarrow\mathbb{R}$ fulfills equation 
\begin{equation}\label{Eq2.1}
 f(xy)+f((1-x)y)-f(y)=\left(f(x)+f(1-x)\right)y^{q}
\end{equation}
for all $x\in ]0, 1[$ and $y\in ]0, 1]$, in case 
$q\neq 1$, if and only if, there exist $c\in\mathbb{R}$ and an additive function $a:\mathbb{R}\rightarrow\mathbb{R}$ such that 
\[
 f(x)= 
\left\{
\begin{array}{rcl}
 a(x)+cx^{q},& \text{ if }& x\in ]0, 1[ \\
0, & \text{ if }& x=1
\end{array}
\right.
\]
furthermore, in case $q=1$, if and only if there exists an additive function $a:\mathbb{R}\rightarrow\mathbb{R}$  and a function 
$\varphi:]0, +\infty[\rightarrow\mathbb{R}$ such that 
\[
 \varphi(xy)=x\varphi(y)+y\varphi(x) 
\qquad 
\left(x, y \in ]0, +\infty[\right)
\]
and 
\[
 f(x)=
\left\{
\begin{array}{rcl}
 a(x)+\varphi(x),& \text{ if }& x\in ]0, 1[ \\
0, & \text{ if }& x=1
\end{array}
\right.
\]
is fulfilled. 
\end{thm}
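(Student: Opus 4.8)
The plan is to recast $(\ast)$ as a homogeneity statement about the symmetric coboundary of $f$ and then to quote Ng's theorem (Theorem~\ref{T1.1}). First I would substitute $y=1$ into \eqref{Eq2.1}; since $1^{q}=1$, everything cancels except $-f(1)=0$, so $f(1)=0$ and the endpoint value is forced. Henceforth I treat $f$ on $]0,1[$ together with this boundary datum, and introduce $G:D_{2}\to\mathbb{R}$ defined by $G(u,v)=f(u)+f(v)-f(u+v)$.

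For any $f$ whatsoever, this $G$ is symmetric and satisfies the cocycle equation \eqref{Eq1.6}, because both sides of \eqref{Eq1.6} reduce to $f(x)+f(y)+f(z)-f(x+y+z)$; thus \eqref{Eq1.5} and \eqref{Eq1.6} hold for free, and the whole force of $(\ast)$ lies in the homogeneity \eqref{Eq1.7}. To see this I would restrict to $y\in\,]0,1[$ and substitute $u=xy$, $v=(1-x)y$, which is a bijection of $]0,1[\,\times\,]0,1[$ onto $D_{2}$ with inverse $y=u+v$, $x=u/(u+v)$. Under it \eqref{Eq2.1} reads $G(u,v)=g\!\left(u/(u+v)\right)(u+v)^{q}$, where $g(x)=f(x)+f(1-x)$. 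Since $u/(u+v)$ is unchanged by $(u,v)\mapsto(tu,tv)$, this gives $G(tu,tv)=t^{q}G(u,v)$ for $t\in\,]0,1[$, i.e.\ \eqref{Eq1.7} with the multiplicative $\mu(t)=t^{q}$.

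Having verified the three hypotheses, I would apply Theorem~\ref{T1.1}, distinguishing the cases by whether $\mu$ is the identity. If $q\neq1$ then $\mu(x)=x^{q}\neq x$, and the theorem yields $c\in\mathbb{R}$ with $G(x,y)=c\bigl[x^{q}+y^{q}-(x+y)^{q}\bigr]$; putting $h(x)=f(x)-cx^{q}$ then makes $h(u)+h(v)-h(u+v)=0$ on $D_{2}$, so $h$ is additive on $D_{2}$ and extends to an additive $a:\mathbb{R}\to\mathbb{R}$ with $f(x)=a(x)+cx^{q}$ on $]0,1[$. If $q=1$ then $\mu(x)=x$, and Theorem~\ref{T1.1} gives $G(x,y)=\varphi(x)+\varphi(y)-\varphi(x+y)$ with $\varphi(xy)=x\varphi(y)+y\varphi(x)$ on $]0,+\infty[$; the same subtraction shows $f-\varphi$ is additive on $D_{2}$, so $f(x)=a(x)+\varphi(x)$ on $]0,1[$. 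Combined with $f(1)=0$, these are precisely the two asserted forms.

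The step I expect to demand the most attention is the converse, where one checks that these $f$ indeed satisfy \eqref{Eq2.1}. On the left-hand side the additive part telescopes, $a(xy)+a((1-x)y)=a(y)$, and so contributes nothing; but on the right-hand side it produces $a(x)+a(1-x)=a(1)$, and matching the two sides forces the compatibility condition $a(1)=-c$ when $q\neq1$ (respectively $a(1)=0$ when $q=1$, as $\varphi(1)=0$). I would close the argument by observing that this condition is automatic for the additive function produced above: $a(1)=h(x)+h(1-x)$ is independent of $x$, and equals $g(x)-c\bigl[x^{q}+(1-x)^{q}\bigr]=-c$, since Ng's formula forces $g(x)=c\bigl[x^{q}+(1-x)^{q}-1\bigr]$. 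Checking that $G$ satisfies the cocycle system only on the restricted domains $D_{2},D_{3}$ — exactly where Theorem~\ref{T1.1} applies — is the remaining, routine, bookkeeping.
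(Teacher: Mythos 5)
Your proposal follows essentially the same route as the paper's proof: the substitution $y=1$ forcing $f(1)=0$, the change of variables $u=xy$, $v=(1-x)y$ turning \eqref{Eq2.1} into a statement about $G(u,v)=f(u)+f(v)-f(u+v)$ on $D_{2}$, the observation that symmetry \eqref{Eq1.5} and the cocycle equation \eqref{Eq1.6} hold for any $f$ while the whole force of the equation sits in the $q$-homogeneity \eqref{Eq1.7}, the application of Ng's Theorem~\ref{T1.1} with $\mu(t)=t^{q}$, and the subtraction of $cx^{q}$ (resp.\ $\varphi$) to obtain a function additive on $D_{2}$ that extends to an additive $a:\mathbb{R}\rightarrow\mathbb{R}$. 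Where you go beyond the paper is the converse, which the paper dismisses as ``an easy computation'': you correctly note that for $y\in\,]0,1[$ the additive part telescopes on the left-hand side of \eqref{Eq2.1} but contributes $a(1)$ on the right, so that $f(x)=a(x)+cx^{q}$ with $f(1)=0$ satisfies \eqref{Eq2.1} only under the compatibility condition $a(1)=-c$ (resp.\ $a(1)=0$ when $q=1$); an arbitrary additive $a$ genuinely fails --- e.g.\ $q=0$, $c=1$, $a(x)=x$ gives left-hand side $1$ and right-hand side $3$. Your closing observation that Ng's formula forces $g(x)=f(x)+f(1-x)=c\bigl[x^{q}+(1-x)^{q}-1\bigr]$, hence $a(1)=-c$ automatically in the ``only if'' direction, is exactly the repair needed: the theorem's ``if and only if'' is literally accurate only once this constraint on $a(1)$ is added to the parametrization, a point that the paper's statement and its one-line treatment of the converse both gloss over.
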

\begin{proof}
Assume that the function $f:]0, 1]\rightarrow\mathbb{R}$  fulfills equation 
\eqref{Eq2.1}. With the substitution $y=1$ we immediately get that $f(1)=0$. 
Therefore it is enough to restrict ourselves to the interval $]0, 1[$. 
Let $(u, v)\in D_{2}$ and let us replace in equation \eqref{Eq2.1} 
$x$ by $\dfrac{u}{u+v}$ and $y$ by $(u+v)$, respectively. 
In this case we obtain that 
\begin{equation}\label{Eq2.2}
 f(u)+f(v)-f(u+v)=\left[f\left(\frac{u}{u+v}\right)+f\left(\frac{v}{u+v}\right)\right] (u+v)^{q}
\end{equation}
holds for all $(u, v)\in D_{2}$. 

Define the functions $\mathscr{C}_{f}$ and $\mathscr{R}_{f}$ on the set $D_{2}$ by 
\[
 \mathscr{C}_{f}(u, v)=f(u)+f(v)-f(u+v) 
\qquad 
\left((u, v)\in D_{2}\right)
\]
and 
\[
 \mathscr{R}_{f}(u, v)=
\left[f\left(\frac{u}{u+v}\right)+f\left(\frac{v}{u+v}\right)\right] (u+v)^{q}. 
\qquad 
\left((u, v)\in D_{2}\right)
\]
With this notations equation \eqref{Eq2.2} yields that 
\[
 \mathscr{C}_{f}(u, v)=\mathscr{R}_{f}(u, v). 
\qquad 
\left((u, v)\in D_{2}\right)
\]
Let us observe that the function $\mathscr{R}_{f}$ is $q$--homogeneous. 
Indeed, for all $t\in ]0, 1[$ and $(u, v)\in D_{2}$
\begin{multline*}
 \mathscr{R}_{f}(tu, tv) =
\left[f\left(\frac{tu}{tu+tv}\right)+f\left(\frac{tv}{tu+tv}\right)\right] (tu+tv)^{q}
\\
=
t^{q}\left[f\left(\frac{u}{u+v}\right)+f\left(\frac{v}{u+v}\right)\right] (u+v)^{q} =
t^{q}\mathscr{R}_{f}(u, v). 
\end{multline*}
This implies that the function $\mathscr{C}_{f}$ is also a $q$--homogeneous function. 
Furthermore, the function $\mathscr{C}_{f}$ is symmetric and also fulfills the cocycle 
equation. All in all, this means that the function $\mathscr{C}_{f}$ satisfies 
equations \eqref{Eq1.5}, \eqref{Eq1.6} and \eqref{Eq1.7} with the multiplicative function 
$\mu(t)=t^{q}$. Thus by Theorem \ref{T1.1}, in case $q\neq 1$ there exists 
$c\in\mathbb{R}$ such that 
\begin{equation}\label{Eq2.3}
 \mathscr{C}_{f}(x, y)= c\left[x^{q}+y^{q}-(x+y)^{q}\right], 
\qquad 
\left((x, y)\in D_{2}\right)
\end{equation}
and in case $q=1$ there exists a function 
$\varphi:]0, +\infty[ \rightarrow\mathbb{R}$ such that 
\[
 \varphi(xy)=x\varphi(y)+y\varphi(x)
\qquad 
\left(x, y \in ]0, +\infty[\right)
\]
and 
\begin{equation}\label{Eq2.4}
 \mathscr{C}_{f}(x, y)=\varphi(x)+\varphi(y)-\varphi(x+y) 
\qquad 
\left((x, y)\in D_{2}\right)
\end{equation}
is satisfied.

Firstly, we deal with the case $q\neq 1$. Define the function 
$\widetilde{f}:]0, 1[\rightarrow\mathbb{R}$ by 
\[
 \widetilde{f}(x)=f(x)-cx^{q}, 
\qquad 
\left(x\in ]0, 1[\right)
\]
then equation \eqref{Eq2.3} yields that the function 
$\widetilde{f}$ is additive on the set $D_{2}$. 
Regarding the function $f$ this shows that there exists an 
additive function 
$a:\mathbb{R}\rightarrow\mathbb{R}$ such that 
\[
 f(x)=cx^{q}+a(x)
\]
holds for any $x\in ]0, 1[$. Since $f(1)=0$, 
function $f$ has the form which had to be proved. 

If $q=1$, then let us define the function 
$\widetilde{f}:]0, 1[\rightarrow\mathbb{R}$ by 
\[
 \widetilde{f}(x)=f(x)-\varphi(x). 
\qquad
\left(x\in ]0, 1[\right)
\]
Equation \eqref{Eq2.4} yields that the function $\widetilde{f}$ is 
an additive function on $D_{2}$. 
Concerning the function $f$ from this we get that 
there exists an additive function $a:\mathbb{R}\rightarrow\mathbb{R}$ such that 

\[
 f(x)=\varphi(x)+a(x)
\]
is fulfilled for any $x\in ]0, 1[$, where the function $\varphi:]0, +\infty[\rightarrow\mathbb{R}$ 
satisfies
\[
 \varphi(xy)=x\varphi(y)+x\varphi(y). 
\qquad 
\left(x, y\in ]0, +\infty[\right)
\]

Since $f(1)=0$ has to hold, the function $f$  is of the form 
\[
 f(x)=
\left\{
\begin{array}{rcl}
 a(x)+\varphi(x),& \text{ if }& x\in ]0, 1[ \\
0, & \text{ if }& x=1
\end{array}
\right.
\]
that had to be proved. 
The converse direction is an easy computation. 
\end{proof}

The following corollary contains the regular solutions of equation \eqref{Eq2.1}. 

\begin{cor}\label{C2.1}
 Let $q\in\mathbb{R}$ be arbitrary and suppose that the function $f:]0, 1]\rightarrow\mathbb{R}$ 
satisfies equation \eqref{Eq2.1} for all $x\in ]0, 1[$ and $y\in ]0, 1]$. 
If $q\neq 1$ assume further that one of the following is true. 
\begin{enumerate}[(i)]
 \item $f$ is bounded above or below on a subset of $]0, 1[$ that has positive 
Lebesgue measure;
\item $f$ is Lebesgue measurable. 
\end{enumerate}
Then there exist $c, c^{\ast}\in\mathbb{R}$ such that
\[
f(x)=
\left\{
\begin{array}{rcl}
c^{\ast}x+cx^{q}, &\text{ if }& x\in ]0, 1[\\ 
0, &\text{ if }& x=1
\end{array}
\right.
\]
In case $q=1$ suppose additionally that one of the statements below hold. 
\begin{enumerate}[(i)]
 \item $f$ is bounded (above \textbf{and} below) on a subset of $]0, 1[$ that has positive 
Lebesgue measure;
\item $f$ is Lebesgue measurable. 
\end{enumerate}
Then there exist $c, c^{\ast}\in\mathbb{R}$ such that 
\[
 f(x)=
\left\{
\begin{array}{rcl}
cx\ln(x)+c^{\ast}x, & \text{ if }& x\in ]0, 1[ \\
0, &\text{ if }& x=1
\end{array}
\right.
\]
is fulfilled.  
\end{cor}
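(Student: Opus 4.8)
The plan is to combine the general-solution form from Theorem~\ref{T2.1} with the regularity lemmas from Section~1, treating the cases $q\neq 1$ and $q=1$ separately. By Theorem~\ref{T2.1}, any $f$ satisfying \eqref{Eq2.1} already has the stated structural form; what remains is to show that the regularity hypotheses force the additive part to be linear (and, when $q=1$, force $\varphi$ to be the $cx\ln(x)$ term). So the corollary is really a regularity-upgrade argument layered on top of the algebraic classification.

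For the case $q\neq 1$, I would start from the representation $f(x)=a(x)+cx^{q}$ on $]0,1[$, where $a$ is additive on $D_{2}$. First I note that the map $x\mapsto cx^{q}$ is continuous, hence shares whichever regularity property is assumed of $f$ (measurability or one-sided boundedness on a positive-measure set); therefore $a(x)=f(x)-cx^{q}$ inherits that same regularity. Then I would invoke Lemma~\ref{L1.1} directly: an additive function on $D_{2}$ that is Lebesgue measurable, or bounded above or below on a set of positive measure, must be of the form $a(x)=c^{\ast}x$. Substituting back gives $f(x)=c^{\ast}x+cx^{q}$ on $]0,1[$, with $f(1)=0$ from the theorem, which is exactly the claimed form.

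For the case $q=1$, the representation is $f(x)=a(x)+\varphi(x)$ with $\varphi(xy)=x\varphi(y)+y\varphi(x)$. Here the natural tool is Lemma~\ref{L1.3}, which is tailored to exactly this $\varphi$: it concludes that $\varphi(x)=cx\ln(x)+d(x)$ for some real derivation $d$, provided the auxiliary function $g(x)=\varphi(x)+\varphi(1-x)$ is measurable or two-sidedly bounded on a positive-measure set. The subtlety is that Lemma~\ref{L1.3} constrains $\varphi$ through $g$, not through $\varphi$ itself, so I first need to transfer the regularity hypothesis on $f$ into a regularity statement about the relevant combination. I would do this by evaluating the functional equation at suitable arguments to express a symmetric combination of $f$-values in terms of $g$; concretely, specializing \eqref{Eq2.1} (or equation \eqref{Eq2.2}) so that the right-hand side reproduces $\bigl(f(x)+f(1-x)\bigr)$ and matching it against the additive-plus-$\varphi$ decomposition lets me write $\varphi(x)+\varphi(1-x)$ as a regularity-preserving expression in $f$. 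Then $a(x)=f(x)-\varphi(x)$ is measurable (resp.\ one-sided bounded) and Lemma~\ref{L1.1} again yields $a(x)=c^{\ast}x$, while the derivation $d$ appearing in $\varphi$ gets absorbed into the additive part; since a measurable (or one-sided bounded on positive measure) real derivation vanishes identically, as recorded in the discussion after the derivation definition, the surviving contribution is $cx\ln(x)+c^{\ast}x$, giving the asserted form with $f(1)=0$.

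The main obstacle I anticipate is precisely this bookkeeping in the $q=1$ case: I must verify that the combined regularity of $a+\varphi$ genuinely implies separate control of the two pieces, so that Lemma~\ref{L1.1} and Lemma~\ref{L1.3} can each be applied, and that the real derivation $d$ is forced to be zero rather than merely absorbed notationally. The hypothesis here is strengthened to two-sided boundedness (versus the one-sided boundedness allowed when $q\neq 1$), which is exactly what Lemma~\ref{L1.3} requires for $g$; I would need to be careful that the transfer step preserves two-sided boundedness and measurability faithfully. By contrast, the $q\neq 1$ case is essentially immediate once the continuity of $cx^{q}$ is used to peel off the power term, so I expect that direction to be routine.
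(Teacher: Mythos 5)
Your overall architecture is the paper's: for $q\neq 1$ you peel off the continuous term $cx^{q}$, observe that $a(x)=f(x)-cx^{q}$ inherits the regularity of $f$, and apply Lemma~\ref{L1.1}; this is exactly the paper's argument and is correct. For $q=1$ you also follow the paper's route (Theorem~\ref{T2.1}, then Lemma~\ref{L1.3} applied to $g(x)=\varphi(x)+\varphi(1-x)$, then Lemma~\ref{L1.1}), though your transfer step is more roundabout than necessary: the paper does not revisit the functional equation at all, but simply sets $g(x)=f(x)+f(1-x)-a(1)$ and uses additivity, $a(x)+a(1-x)=a(1)$, to conclude $g(x)=\varphi(x)+\varphi(1-x)$, so that $g$ inherits the regularity of $f$ directly.

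There is, however, a genuine slip in your endgame for $q=1$. You assert that $a(x)=f(x)-\varphi(x)$ is measurable (resp.\ bounded) and apply Lemma~\ref{L1.1} to it, and separately that the derivation $d$ must vanish ``since a measurable real derivation vanishes identically.'' Neither claim is available. After Lemma~\ref{L1.3} you know only that $\varphi(x)=cx\ln(x)+d(x)$ with $d$ an \emph{arbitrary} real derivation, which can be wildly irregular; hence $\varphi$, and therefore $f-\varphi=a$, need not inherit any regularity from $f$, and $d$ itself carries no regularity hypothesis, so you cannot conclude $d\equiv 0$. Indeed $d$ is \emph{not} forced to be zero: the decomposition $f=a+\varphi$ in Theorem~\ref{T2.1} is not unique, and any derivation can be shifted between $a$ and $\varphi$. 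The paper's resolution is precisely the ``absorption'' you mention and then abandon: write $f(x)-cx\ln(x)=a(x)+d(x)$; the left-hand side is regular (since $f$ is, and $x\mapsto cx\ln(x)$ is continuous), while the right-hand side is additive, being the sum of an additive function and a derivation. Lemma~\ref{L1.1} is then applied to the \emph{sum} $a+d$, giving $a(x)+d(x)=c^{\ast}x$ with no claim about $d$ alone, whence $f(x)=cx\ln(x)+c^{\ast}x$ on $]0,1[$ and $f(1)=0$. (A small additional inaccuracy: in the $q=1$ case the hypothesis is two-sided boundedness, as Lemma~\ref{L1.3} requires for $g$; your parenthetical ``one-sided bounded'' at that stage is harmless only because Lemma~\ref{L1.1} needs less than what is assumed.) With this correction your argument coincides with the paper's proof.
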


\begin{proof}
 Firstly, we investigate the case $q\neq 1$. 
From Theorem \ref{T2.1} we obtain that 
\[
 f(x)=a(x)+cx^{q}
\]
holds for all $x\in ]0, 1[$ and $f(1)=0$. 
If we rearrange this, it follows that 
\[
 a(x)=f(x)-c x^{q}. 
\qquad 
\left(x\in ]0, 1[\right)
\]
By our assumptions $f$ is bounded above or 
below on a subset of $]0, 1[$ that has positive 
Lebesgue measure, or $f$ is a Lebesgue measurable 
function. 

This implies that the additive function 
$a$ fulfills condition (i) or (ii) of 
Lemma \ref{L1.1}. From this, we obtain that there exists a 
constant $c^{\ast}\in\mathbb{R}$ such that $a(x)=c^{\ast}x$. 
This implies however that 
\[
 f(x)=c^{\ast}x+cx^{q}
\]
holds for all $x\in ]0, 1[$ and $f(1)=0$. 

Secondly, we assume that $q=1$. From Theorem \ref{T2.1}, we obtain that $f(1)=0$ and
\[
 f(x)=\varphi(x)+a(x),  
\qquad
\left(x\in ]0, 1[\right)
\]
where $\varphi:]0, +\infty[\rightarrow\mathbb{R}$ fulfills equation 
\[
 \varphi(xy)=x\varphi(y)+y\varphi(x) 
\qquad
\left(x, y \in ]0, +\infty[\right)
\]
and $a:\mathbb{R}\rightarrow\mathbb{R}$ is an additive function. 
Define the function $g:]0, 1[\rightarrow\mathbb{R}$ by 
\[
 g(x)=f(x)+f(1-x)-a(1). 
\qquad 
\left(x\in ]0, 1[\right)
\]
In this case the function $g$ is Lebesgue measurable or 
bounded on a subset of $]0, 1[$ that has positive 
Lebesgue measure. 
Furthermore, 
\[
 g(x)=f(x)+f(1-x)-a(1)=
\varphi(x)+a(x)+\varphi(1-x)+a(1-x)-a(1)=
\varphi(x)+\varphi(1-x), 
\]
where we used that $a$ is an additive function. 
All in all, this implies that the function $g$ satisfies the assumptions 
of Lemma \ref{L1.3}. Thus there exist $c\in\mathbb{R}$ and a real derivation 
$d:\mathbb{R}\rightarrow\mathbb{R}$ such that 
\[
 \varphi(x)=cx\ln(x)+d(x). 
\qquad 
\left(x\in ]0, 1[\right)
\]
Concerning the function $f$ this yields that 
\[
 f(x)=cx\ln(x)+d(x)+a(x), 
\qquad 
\left(x\in ]0, 1[\right)
\]
or if we rearrange this, 
\[
 f(x)-cx\ln(x)=d(x)+a(x). 
\qquad 
\left(x\in ]0, 1[\right)
\]
By our assumptions the function $f$ is bounded on a subset of $]0, 1[$ that has positive 
Lebesgue measure or it is Lebesgue measurable. Furthermore, the function $d(x)+a(x)$  is a 
sum of two additive function, that is, this function is also additive, which is 
bounded on a subset of $]0, 1[$ with positive Lebesgue measure 
or it is Lebesgue measurable. In view of Lemma \ref{L1.1}, 
there exists $c^{\ast}\in\mathbb{R}$ such that 
$d(x)+a(x)=c^{\ast}x$ holds for all $x\in\mathbb{R}$.  
Thus 
\[
 f(x)=cx\ln(x)+c^{\ast}x
\]
holds for all $x\in ]0, 1f$ and $f(1)=0$. 
\end{proof}

\begin{rem}
 Under the assumptions of the previous corollary, in case for the function 
$\lim_{x\to 1-}f(x)$ exists and $\lim_{x\to 1-}f(x)=f(1)$, then in case $q\neq 1$, 
\[
 f(x)=c^{\ast}(x-x^{q}) 
\qquad 
\left(]0, 1]\right)
\]
is fulfilled with some $c^{\ast}\in\mathbb{R}$. Furthermore, in case 
$q=1$, 
\[
 f(x)=cx\ln(x)
\qquad 
\left(x\in ]0, 1]\right)
\]
is satisfied with a certain $c\in\mathbb{R}$. 
\end{rem}

At this point of the paper we turn to deal with equation 
$\left(\ast\ast\right)$. Before this, we present a more general equation. 
Thus the solutions of the above mentioned functional equation 
will be showed as a corollary of the following result. 
Additionally, the regular solutions of $\left(\ast\ast\right)$ 
will be dealt with, as well. 

The following lemma was proved by E.~Vincze in 1962 
for commutative groups. Although $\left(]0, 1], \cdot\right)$ 
is not a group, only a semigroup, we remark that the method used in 
Satz 5 in Vincze \cite{Vin62} is appropriate for commutative semigroups, as well. 

\begin{lem}\label{L2.1}
 Let $g:]0, 1]\rightarrow\mathbb{R}$ be a given function, and 
$f:]0, 1]\rightarrow\mathbb{R}$ be such that 
\begin{equation}\label{Eq2.5}
 f(xy)=g(y)f(x)+g(x)f(y)
\end{equation}
holds for all $x, y\in ]0, 1]$. 

If $g$ is a multiplicative function, then 
\[
 f(x)=g(x)\ell(x), 
\qquad 
\left(x\in ]0, 1]\right)
\]
where $\ell:]0, 1]\rightarrow\mathbb{R}$ is a logarithmic function, and in case
$g$ is not a multiplicative function, that is, there exist $t_{1}, t_{2}\in ]0, 1]$ 
such that $g(t_{1}t_{2})\neq g(t_{1})g(t_{2})$, then 
\[
 f(x)=
\frac{\left[g(t_{1}x)-g(t_{1})g(x)\right]f(t_{1})}{g(t_{1}t_{2})-g(t_{1})g(t_{2})}
\]
holds for all $x\in ]0, 1]$. 
\end{lem}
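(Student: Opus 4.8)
The plan is to separate the multiplicative and the non-multiplicative cases; in the second case everything will follow from a single separation identity produced by the associativity of the product on $]0,1]$.

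First I would suppose that $g$ is multiplicative. The main subcase is when $g$ never vanishes on $]0,1]$: there I would simply divide \eqref{Eq2.5} by $g(xy)=g(x)g(y)$ to see that $\ell:=f/g$ satisfies $\ell(xy)=\ell(x)+\ell(y)$, so $\ell$ is logarithmic and $f=g\ell$, as required. The remaining possibilities are degenerate and can be absorbed. Putting $y=1$ in \eqref{Eq2.5} gives $f(x)=g(x)f(1)+g(1)f(x)$, so if $g$ is not identically zero then $g(1)=1$; and since $x=\sqrt{x}\,\sqrt{x}$ forces $g(x)=g(\sqrt{x})^{2}\ge 0$, the function $g$ is nonnegative. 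If $g\equiv 0$, then \eqref{Eq2.5} immediately yields $f\equiv 0$. If $g\not\equiv 0$ but $g$ has a zero, I would observe that its zero set is a down-set closed under taking square roots, whence its supremum is pushed up to $1$ and $g\equiv 0$ on $]0,1[$, so again $f\equiv 0$. In either degenerate case $f\equiv 0$ is of the form $g\ell$, and the multiplicative case is settled.

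Next I would suppose that $g$ is \emph{not} multiplicative and write $h(x,y)=g(xy)-g(x)g(y)$, so that $h$ is symmetric and $h(t_1,t_2)\neq 0$ for the given points. The crux is to evaluate $f(xyz)$ in two ways. Expanding $f((xy)z)$ and $f(x(yz))$ by \eqref{Eq2.5} and cancelling the common term $g(x)g(z)f(y)$, I expect to arrive at the separation identity
\[
h(x,y)f(z)=h(y,z)f(x)\qquad (x,y,z\in\, ]0,1]).
\]
This bookkeeping is the step I anticipate to be the main obstacle: one must carry out both expansions and verify that every term other than those displayed cancels.

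Granting the identity, the closed form is immediate. Substituting $(x,y,z)=(x,t_1,t_2)$ yields $h(x,t_1)f(t_2)=h(t_1,t_2)f(x)$, and dividing by $h(t_1,t_2)\neq 0$ gives
\[
f(x)=\frac{\left[g(t_1x)-g(t_1)g(x)\right]f(t_2)}{g(t_1t_2)-g(t_1)g(t_2)}\qquad (x\in\, ]0,1]),
\]
which is the asserted expression (with the value of $f$ evaluated at the second point $t_2$). Finally I would record the converse check: reinserting each candidate into \eqref{Eq2.5} — using the logarithmic property of $\ell$ in the multiplicative case and the separation identity in the non-multiplicative case — confirms that these are precisely the solutions.
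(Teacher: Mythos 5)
Your proof is correct, and in the non-multiplicative case it is in substance the very argument the paper only cites: the paper gives no proof of Lemma \ref{L2.1}, referring instead to Satz 5 of Vincze \cite{Vin62} with the remark that the method there works for commutative semigroups, and your associativity computation --- expanding $f((xy)z)=f(x(yz))$ by \eqref{Eq2.5}, cancelling $g(x)g(z)f(y)$, and obtaining $\bigl[g(xy)-g(x)g(y)\bigr]f(z)=\bigl[g(yz)-g(y)g(z)\bigr]f(x)$ --- is exactly that method, so the bookkeeping step you flagged as the anticipated obstacle does go through. Your explicit treatment of the degenerate multiplicative subcases is a genuine addition over the bare citation; it is sound, with two small repairs: $g(1)=1$ for a nontrivial multiplicative $g$ follows from $g(1)=g(1)^{2}$ together with $g(x)=g(x)g(1)$ (not from the $y=1$ substitution in \eqref{Eq2.5} alone, which only yields $(1-g(1))f(x)=g(x)f(1)$), and in the subcase where $g$ vanishes on $]0,1[$ you should add the substitution $x=y=1$, giving $f(1)=2g(1)f(1)$ and hence $f(1)=0$, to get $f\equiv 0$ on the whole of $]0,1]$.

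The most valuable outcome of your derivation is the parenthetical you almost buried: your formula with $f(t_{2})$ in the numerator is the correct one, and the paper's displayed formula with $f(t_{1})$ is an index slip, not an equivalent variant. This can be checked on the concrete solution $g(x)=\frac{x^{\alpha}+x^{\beta}}{2}$, $f(x)=x^{\alpha}-x^{\beta}$ with $\alpha\neq\beta$, for which $g(xy)-g(x)g(y)=\frac{1}{4}f(x)f(y)$: the printed formula returns $\frac{f(t_{1})}{f(t_{2})}\,f(x)$, which differs from $f(x)$ whenever $f(t_{1})\neq f(t_{2})$, whereas your version returns $f(x)$ identically. (Equivalently, one may keep $f(t_{1})$ but must then replace the bracket by $g(t_{2}x)-g(t_{2})g(x)$, i.e.\ swap $t_{1}$ and $t_{2}$ in one of the two places.) The same slip propagates into the proof of Corollary \ref{C2.2}, where the constant should read $\frac{f(t_{2})}{t_{2}^{\alpha}-t_{2}^{\beta}}$; since only the existence of some constant $c$ is used there, the corollary's conclusion is unaffected.
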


\begin{cor}\label{C2.2}
 Let $\alpha, \beta\in\mathbb{R}$ be arbitrary, $f:]0, 1]\rightarrow\mathbb{R}$ be 
a function for which 
\begin{equation}\label{Eq2.9}
f(xy)=\left(\frac{x^{\alpha}+x^{\beta}}{2}\right)f(y)+\left(\frac{y^{\alpha}+y^{\beta}}{2}\right)f(x)
\end{equation}
holds for any $x, y\in ]0, 1]$. 
Then we have the following two possibilities. 
\begin{enumerate}[(i)]
 \item if $\alpha\neq\beta$, then there exist $c\in\mathbb{R}$ such that 
\[
 f(x)=c\left(x^{\alpha}-x^{\beta}\right);
\qquad 
\left(x\in ]0, 1]\right)
\]
\item if $\beta=\alpha$, then there exist a logarithmic function 
$\ell:]0, 1] \rightarrow\mathbb{R}$ such that 
\[
 f(x)=cx^{\alpha}\ell(x)
\]
holds for all $x\in ]0, 1]$. 
\end{enumerate}
\end{cor}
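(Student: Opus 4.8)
The plan is to apply Lemma~\ref{L2.1} directly, since equation \eqref{Eq2.9} is precisely equation \eqref{Eq2.5} with the specific choice
\[
g(x)=\frac{x^{\alpha}+x^{\beta}}{2}.
\qquad
\left(x\in ]0, 1]\right)
\]
So the first step is to determine whether this particular $g$ is multiplicative. A short computation gives
\[
g(x)g(y)=\frac{1}{4}\left(x^{\alpha}y^{\alpha}+x^{\alpha}y^{\beta}+x^{\beta}y^{\alpha}+x^{\beta}y^{\beta}\right),
\qquad
g(xy)=\frac{1}{2}\left((xy)^{\alpha}+(xy)^{\beta}\right),
\]
and comparing these shows that $g(xy)=g(x)g(y)$ holds for all $x,y$ if and only if $\alpha=\beta$. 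Thus the two cases of the corollary correspond exactly to the multiplicative and non-multiplicative alternatives of Lemma~\ref{L2.1}.

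In the case $\alpha=\beta$, the function $g(x)=x^{\alpha}$ is multiplicative, so Lemma~\ref{L2.1} yields $f(x)=x^{\alpha}\ell(x)$ for some logarithmic function $\ell:]0,1]\rightarrow\mathbb{R}$, which is exactly statement (ii). (I note in passing that the constant $c$ appearing in (ii) is absorbed into the logarithmic function, since $c\ell$ is again logarithmic.) In the case $\alpha\neq\beta$, the function $g$ is not multiplicative, so the second branch of Lemma~\ref{L2.1} applies: I would fix a witnessing pair $t_{1},t_{2}$ with $g(t_{1}t_{2})\neq g(t_{1})g(t_{2})$ and substitute the explicit formula for $g$ into
\[
f(x)=\frac{\left[g(t_{1}x)-g(t_{1})g(x)\right]f(t_{1})}{g(t_{1}t_{2})-g(t_{1})g(t_{2})}.
\]

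The main work is then a routine but careful simplification of the numerator $g(t_{1}x)-g(t_{1})g(x)$. Expanding it gives a combination of $x^{\alpha}$ and $x^{\beta}$ whose coefficients involve only the fixed quantities $t_{1}^{\alpha}, t_{1}^{\beta}$; after dividing by the (nonzero, $x$-independent) denominator and lumping all the constant factors together, the dependence on $x$ collapses to $x^{\alpha}-x^{\beta}$ up to a multiplicative constant. Defining $c$ to be that aggregated constant then gives $f(x)=c\left(x^{\alpha}-x^{\beta}\right)$, which is statement (i). The only real obstacle is bookkeeping: one must verify that the $x^{\alpha}$ and $x^{\beta}$ terms survive with equal and opposite coefficients so that the answer is genuinely proportional to $x^{\alpha}-x^{\beta}$ rather than to an unrelated linear combination. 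Finally I would check the converse by substituting each candidate back into \eqref{Eq2.9}, which is immediate for (i) and follows from the logarithmic property for (ii).
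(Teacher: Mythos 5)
Your proposal is correct and takes essentially the same route as the paper's proof: apply Lemma~\ref{L2.1} with $g(x)=\frac{1}{2}\left(x^{\alpha}+x^{\beta}\right)$, observe that $g$ is multiplicative exactly when $\alpha=\beta$, and in the non-multiplicative case simplify the representation formula, where the factorization $g(t_{1}x)-g(t_{1})g(x)=\frac{1}{4}\left(t_{1}^{\alpha}-t_{1}^{\beta}\right)\left(x^{\alpha}-x^{\beta}\right)$ yields precisely the paper's constant $c=f(t_{1})/\left(t_{2}^{\alpha}-t_{2}^{\beta}\right)$. Your side remark that the constant $c$ in statement (ii) can be absorbed into the logarithmic function is also accurate.
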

\begin{proof}
Firstly, let us suppose that $\alpha\neq \beta$ . 
In this case the function $g:]0, 1]\rightarrow\mathbb{R}$ 
defined by $g(x)=\frac{1}{2}\left(x^{\alpha}+x^{\beta}\right)$ is 
not a multiplicative function. Furthermore, with this notation, from 
\eqref{Eq2.9} equation \eqref{Eq2.5} follows. Thus, by Lemma \ref{L2.1}, 
\[
f(x)=
\frac{\left[g(t_{1}x)-g(t_{1})g(x)\right]f(t_{1})}{g(t_{1}t_{2})-g(t_{1})g(t_{2})} 
\]
holds for all $x\in ]0, 1]$, where $t_{1}, t_{2}\in ]0, 1]$ are arbitrarily fixed. 
After using the form of the function $g$, we get that 
\[
 f(x)=\frac{f(t_{1})}{t_{2}^{\alpha}-t_{2}^{\beta}}\left(x^{\alpha}-x^{\beta}\right), 
\qquad 
\left(x\in ]0, 1]\right)
\]
that is, there exists a constant $c\in\mathbb{R}$ such that 
\[
 f(x)=c\left(x^{\alpha}-x^{\beta}\right)
\]
is satisfied for all $x\in ]0, 1]$. 

Secondly, assume that $\alpha=\beta$. In this case the function 
$g:]0, 1]\rightarrow\mathbb{R}$ defined by $g(x)=x^{\alpha}$ is a
multiplicative function. Additionally, let us observe that with this 
notations \eqref{Eq2.9} becomes to equation \eqref{Eq2.5}. 
Therefore, due to Lemma \ref{L2.1}, there exist $c\in\mathbb{R}$ and a logarithmic function 
$\ell:]0, 1]\rightarrow\mathbb{R}$ such that 
\[
 f(x)=cx^{\alpha}\ell(x)
\]
is fulfilled for all $x\in]0, 1]$. 

Finally, a facile computation shows the correctness of the converse direction. 
\end{proof}

From this corollary we can effortlessly get the regular solutions of equation 
\eqref{Eq2.9}. Let us observe that in case $\alpha\neq \beta$, the solutions of this 
equation are regular already. Therefore in this case the 
regularity assumptions (that is, the nonnegativity and the differentiability) 
are superfluous in the papers \cite{Fur10} and \cite{ST75}. 
Furthermore, if $\alpha=\beta$, then the above mentioned regularity suppositions can be 
significantly weakened. Namely, making use of Lemma \ref{L1.2} the 
following statement can be proved. 

\begin{cor}\label{C2.3}
 Let $\alpha\in\mathbb{R}$ be arbitrarily fixed and assume that 
the function $f:]0, 1]\rightarrow\mathbb{R}$ 
fulfills equation 
\begin{equation}\label{2.10}
 f(xy)=y^{\alpha}f(x)+x^{\alpha}f(y)
\end{equation}
for all $x, y\in ]0, 1]$. 
Suppose further that one of the following statements is true. 
\begin{enumerate}[(i)]
 \item $f$ is bounded above or below on a subset of $]0, 1[$ that has positive 
Lebesgue measure;
\item $f$ is Lebesgue measurable. 
\end{enumerate}
Then there exists $c\in\mathbb{R}$ such that 
\[
 f(x)=cx^{\alpha}\ln(x). 
\qquad 
\left(x\in\mathbb{R}\right)
\]

\end{cor}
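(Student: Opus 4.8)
The plan is to reduce equation \eqref{2.10} to a single logarithmic function and then to invoke the regularity result Lemma \ref{L1.2}. Since here $\beta=\alpha$, the hypotheses of Corollary \ref{C2.2} are satisfied, and part (ii) of that corollary produces a constant $c_{0}\in\mathbb{R}$ and a logarithmic function $\ell_{0}$ with $f(x)=c_{0}x^{\alpha}\ell_{0}(x)$. Absorbing $c_{0}$ into the logarithmic function (a constant multiple of a logarithmic function is again logarithmic), I would work with $\ell:=c_{0}\ell_{0}$, so that $f(x)=x^{\alpha}\ell(x)$ on $]0,1]$; equivalently and more directly, one may simply set $\ell(x)=x^{-\alpha}f(x)$ and substitute into \eqref{2.10} to read off $\ell(xy)=\ell(x)+\ell(y)$, i.e. that $\ell$ is logarithmic on $]0,1]$ and hence on $\widetilde{\mathscr{L}}$. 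The goal is then to check that $\ell$ inherits from $f$ the regularity hypotheses of Lemma \ref{L1.2}; that lemma then yields $\ell(x)=c\ln(x)$, and multiplying back by $x^{\alpha}$ gives $f(x)=cx^{\alpha}\ln(x)$, the value at $x=1$ being consistent since $f(1)=0$ (put $x=y=1$ in \eqref{2.10}) and $c\cdot 1^{\alpha}\ln(1)=0$.

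The transfer of regularity is the crux, and I expect it to be the only real difficulty. The measurable case is immediate: $x\mapsto x^{-\alpha}$ is continuous, hence Lebesgue measurable, on $]0,1[$, so the product $\ell=x^{-\alpha}f$ is measurable whenever $f$ is, and hypothesis (ii) of Lemma \ref{L1.2} holds. The boundedness case (i) is more delicate. Suppose $f\le K$ on a \emph{measurable} set $E\subset ]0,1[$ of positive measure (a lower bound is handled symmetrically by passing to $-f$). Writing $]0,1[=\bigcup_{n}[1/n,1-1/n]$ and using countable additivity, I would first replace $E$ by $E'=E\cap[a,b]$ for a suitable compact $[a,b]\subset ]0,1[$, so that $E'$ is still measurable with positive measure and $0<m\le x^{-\alpha}\le M$ on $E'$ for positive constants $m,M$.

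It then remains to bound $\ell=x^{-\alpha}f$ above on $E'$, and this needs a brief sign analysis carried out pointwise on all of $E'$. If $K\ge 0$, then at points of $E'$ with $f\ge 0$ one has $\ell=x^{-\alpha}f\le Mf\le MK$, while at points with $f<0$ one has $\ell<0\le MK$, so $\ell\le MK$ throughout $E'$; if $K<0$, then $f<0$ on all of $E'$ and $\ell=x^{-\alpha}f\le mf\le mK$ on $E'$. In either case $\ell$ is bounded above by a constant on $E'$, so hypothesis (i) of Lemma \ref{L1.2} applies. The point where one must take care is that the whole argument stays inside the measurable set $E'$: one obtains a \emph{uniform} bound valid at every point of $E'$, rather than splitting $]0,1[$ into the generally non-measurable sign-sets $\{f\ge 0\}$ and $\{f<0\}$ and arguing on those, which is illegitimate because Lemma \ref{L1.2} requires a genuinely measurable set of positive measure; indeed it is precisely the non-measurability of such sign-sets that lets wild logarithmic solutions survive when no regularity is imposed. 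With Lemma \ref{L1.2} in force we get $\ell(x)=c\ln(x)$ on $]0,1[$, whence $f(x)=cx^{\alpha}\ln(x)$ on $]0,1]$, as claimed.
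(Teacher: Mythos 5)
Your proof is correct and follows exactly the route the paper intends (the paper offers no written-out proof of Corollary \ref{C2.3}, only the remark that it follows from Corollary \ref{C2.2} and Lemma \ref{L1.2}): pass to the logarithmic function $\ell(x)=x^{-\alpha}f(x)$ and invoke Lemma \ref{L1.2}, the conclusion of which should indeed be read as $\ell(x)=c\ln(x)$, as you do. Your careful transfer of the one-sided boundedness hypothesis from $f$ to $\ell$ via a compact subinterval on which $x^{-\alpha}$ is bounded between positive constants, together with the pointwise sign analysis on the single measurable set $E'$, correctly supplies the detail the paper leaves implicit.
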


\section*{Acknowledgments}
The author is indebted to the anonymous referee. His/her suggestions improved the manuscript 
significantly.\\
This research has been supported by the Hungarian Scientific Research Fund (OTKA)
Grant NK 814 02 and by the T\'{A}MOP 4.2.1./B-09/1/KONV-2010-0007 project implemented
through the New Hungary Development Plan co-financed by the European Social Fund and
the European Regional Development Fund.

\end{document}